\newtheorem{theorem}{Theorem}
\newtheorem{remark}{Remark}
\newtheorem{definition}{Definition}
\begin{document}
\author{N.Areshidze}
\title[Lebesgue's test for general Dirichlet's integrals]{Lebesgue's test for general Dirichlet's integrals}
\address{N.Areshidze, Tbilisi State University, Faculty of Exact and Natural Sciences, Department of Mathematics, Chavchavadze str. 1, tbilisi 0128, georgia}
\email{nika.areshidze15@gmail.com}
\maketitle
\begin{abstract}
It is well-known the Lebesgue \cite{Lebesgue, Zygmund} test for trigonometric Fourier series. Taberski \cite{Taberski1, Taberski2} considered real-valued Lebesgue locally integrable functions $f$, such that
\begin{equation*}
  \lim_{T \to \infty} \frac{1}{T} \int_{T}^{T+c} |f(t)| \, dt\ =0; \quad \lim_{T \to \infty} \frac{1}{T} \int_{-T-c}^{-T} |f(t)| \, dt \ =0  
\end{equation*}
for every fixed $c>0$. For this class of functions, he defined generalized Dirichlet's integrals. Besides, Taberski \cite{Taberski1,Taberski2} investigated problems of convergence and $(C,1)$-summability of these integrals. In this paper, the analogous of the Lebesgue test for the generalized Dirichlet's integrals is proved.
\end{abstract}

\noindent \textbf{Key words and phrases:} Lebesgue's test, general Dirichlet's integrals, Convergence.

\vspace{2mm}

\noindent \textbf{2010 Mathematics Subject Classification:} 42A20, 42A38.

\vspace{3mm}

One of the most important tests for the convergence of Fourier series are those of Dini, Dini-Lipschitz, and Dirichlet-Jordan, each of which is based on a different idea. In 1905 Lebesgue proved the theorem which is known as Lebesgue's test and which is more general than the others.  
In 1973 Taberski [3] considered class $E$ of real-valued, Lebesgue locally integrable  functions. Taberski \cite{Taberski1, Taberski2} investigated problems of convergence and (C,1)-summability of this integrals.

\begin{definition}
Let $E$ be the class of all real-valued,  Lebesgue locally integrable functions $f$  such that 
\begin{equation}
\lim_{T \to \infty} \frac{1}{T} \int_{T}^{T+c} |f(t)| \, dt\ =0; \quad \lim_{T \to \infty} \frac{1}{T} \int_{-T-c}^{-T} |f(t)| \, dt \ =0  
\end{equation}
for every fixed $c>0$.
\end{definition}

\begin{remark}
If real-valued, Lebesgue locally integrable function $f$ is periodic (with a least positive period $m$) then (1) is fulfilled. Indeed, there exists $k>0$, such that $c<k\cdot m$. Then we have
\[\frac{1}{T} \int_{T}^{T+c} |f(t)| \, dt\ \le \frac{1}{T} \int_{T}^{T+k\cdot m} |f(t)| \, dt\ = \]\\
\[ \frac{1}{T} \cdot k \int_{T}^{T+ m} |f(t)| \, dt\ = \frac{1}{T} \cdot k \cdot M \rightarrow 0 \]
when $T \rightarrow +\infty.$ M is the integral from $|f|$ on interval $[T; T+m]$ . Similarly we show that the second condition in (1) is fulfilled.
\end{remark}
Let for any given function $f\in E$ and a positive number $l$ 
\[a_{k}^{l}=\dfrac{1}{l} \int_{-l}^{l} f(t) \cos\frac{\pi kt}{l} \,dt \ , \quad b_{k}^{l}=\dfrac{1}{l} \int_{-l}^{l} f(t) \sin\frac{\pi kt}{l} \,dt, \]\\
\[S_{n}^{l}(x;f)= \dfrac{a_0}{2}+ \sum_{k=1}^{n} \left( a_{k}^{l} \cos \frac{\pi kx}{l}+b_{k}^{l} \sin \frac{\pi kx}{l} \right) , \quad \]
\begin{flushleft}
where $x\in (-\infty, +\infty),\quad n=0,1,2...$. 
\end{flushleft}

Let
\[\phi_{x}(t)=(f(x+t)+f(x-t)-2f(x)), \quad D_{n}^{l}(t)=\frac{\sin \frac{(2n+1) \pi t}{2l}}{2 \sin \frac{\pi t}{2l}}.\] 

Taberski \cite{Taberski2} showed that if $f\in E$ then for every fixed point of $[a;b]$ 
\begin{equation}
S_{n}^{l}(x;f)-f(x)=\frac{1}{l} \int_{0}^{l} \phi_{x}(t) D_{n}^{l}(t) \, dt \ + o(1), \quad l\rightarrow +\infty,
\end{equation}
(2) is uniformly in $x\in [a;b] \quad (-\infty < a \le b <+\infty), n=0,1,2,..., $ if $f \in E$ is bounded on $[a;b]$.
It is easy to see that 
\begin{equation*}
    \frac{1}{l} \int_{0}^{l} \phi_{x}(t) D_{n}^{l}(t) \, dt = \frac{1}{l} \int_{0}^{l} \chi_{l}(t)\sin \frac{\pi nt}{l}\,dt\ +\frac{1}{2l} \int_{0}^{l} \phi_{x}(t)\cos \frac{\pi nt}{l}\,dt\ = 
\end{equation*}

\[M_{n}^{l}(x)+N_{n}^{l}(x)\]
where $\chi_{l}(t)=\frac{1}{2}\phi_{x}(t)\cot \frac{\pi t}{2l}.$
We have
\begin{equation}
\left| \frac{1}{2}\cot \frac{\pi t}{2l}\cdot \sin\frac{\pi nt}{l}\right| \le n\pi,
\end{equation}

\begin{equation}
\left| \chi_{l}(t) \sin\frac{\pi nt}{l}\right| \le |\phi_{x}(t)|,\quad t\in [l-\eta; l].
\end{equation}

Let for $x\in [a;b]$  

\begin{equation}
\Phi(h) =\int_{0}^{h} |\phi_{x}(t)| \, dt =o(h), \quad h\rightarrow +\infty.
\end{equation}

Using (3)-(5) we have 

\[\left| \frac{1}{l} \int_{0}^{l} \phi_{x}(t) D_{n}^{l}(t) \, dt \right| \le \frac{\pi}{2} \int_{\eta}^{l} \frac{|\phi_{x}(t)-\phi_{x}(t+\eta)|}{t} \, dt\ + \eta \int_{\eta}^{l} \frac{|\phi_{x}(t)|}{t^2} \, dt\ + \] 
\begin{equation}
+ \pi \eta^{-1}\int_{0}^{2\eta} |\phi_{x}(t)| \, dt\ +o(1),  \quad l\rightarrow +\infty.
\end{equation}

If $f\in E$ is bounded on $[a;b]$ and (5) is satisfied uniformly in $x\in[a;b]$ then (6) is fulfilled uniformly in $x\in[a;b].$

\begin{theorem}
Suppose $f\in E$ and for $x_0\in[a;b]$ 

\begin{equation}
\Phi(h)=\int_{0}^{h} |\phi_{x_0}(t)|dt=o(h),\textrm{when} \quad h\rightarrow 0, \quad h\rightarrow +\infty, 
\end{equation}
and
\begin{equation}
\int_{\eta}^{l} \frac{|\phi_{x_0}(t)-\phi_{x_0}(t+\eta)|}{t}dt\rightarrow 0,\quad \textrm{when}\quad \eta=\frac{l}{n}\rightarrow 0, \quad n,l \rightarrow +\infty.
\end{equation}

\begin{flushleft}
Then $S_{n}^{l}(x;f)$ converges to $f(x_0),$ when $\eta\rightarrow 0, \quad n\rightarrow +\infty, \quad l \rightarrow +\infty.$ Convergence is uniform on $[a;b]$ if $f\in E$ is bounded and conditions $(7)$ and $(8)$ are satisfied uniformly.
\end{flushleft}

\end{theorem}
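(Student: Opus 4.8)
The plan is to reduce everything to the pointwise estimate (6) and then show that, under hypotheses (7) and (8), each of its three explicit terms tends to $0$ in the regime $\eta=\frac{l}{n}\to 0$, $n,l\to+\infty$. By Taberski's representation (2) it suffices to prove that
\[
\frac{1}{l}\int_{0}^{l}\phi_{x_0}(t)D_{n}^{l}(t)\,dt\to 0 .
\]
Since hypothesis (7) in particular supplies $\Phi(h)=o(h)$ as $h\to+\infty$, which is exactly condition (5), the inequality (6) is available and reduces the task to controlling its right-hand side. I would treat the three summands of (6) separately.

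The first and third terms are immediate. The first term, $\frac{\pi}{2}\int_{\eta}^{l}\frac{|\phi_{x_0}(t)-\phi_{x_0}(t+\eta)|}{t}\,dt$, tends to $0$ directly by hypothesis (8). The third term equals $\pi\eta^{-1}\Phi(2\eta)$, and since (7) gives $\Phi(h)=o(h)$ as $h\to 0$, we have $\eta^{-1}\Phi(2\eta)=2\cdot\frac{\Phi(2\eta)}{2\eta}\to 0$ as $\eta\to 0$.

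The main work, and the step I expect to be the genuine obstacle, is the second term $\eta\int_{\eta}^{l}\frac{|\phi_{x_0}(t)|}{t^{2}}\,dt$. Here I would integrate by parts against $\Phi$, using $d\Phi(t)=|\phi_{x_0}(t)|\,dt$ and $u=t^{-2}$, to obtain
\[
\eta\int_{\eta}^{l}\frac{|\phi_{x_0}(t)|}{t^{2}}\,dt
=\eta\,\frac{\Phi(l)}{l^{2}}-\frac{\Phi(\eta)}{\eta}+2\eta\int_{\eta}^{l}\frac{\Phi(t)}{t^{3}}\,dt .
\]
The first boundary term equals $\frac{1}{n}\cdot\frac{\Phi(l)}{l}\to 0$ (using $\eta=\frac{l}{n}$ and $\Phi(l)=o(l)$), and the second is $-\frac{\Phi(\eta)}{\eta}\to 0$ by (7) at the origin. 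For the surviving integral I would exploit (7) at \emph{both} ends: given $\varepsilon>0$, choose $\delta>0$ and $R>\delta$ with $\Phi(t)<\varepsilon t$ for $0<t<\delta$ and for $t>R$, and split $\int_{\eta}^{l}=\int_{\eta}^{\delta}+\int_{\delta}^{R}+\int_{R}^{l}$ for $\eta<\delta$, $l>R$. On $(\eta,\delta)$ and $(R,l)$ the bound $\Phi(t)<\varepsilon t$ together with $\int t^{-2}\,dt$ yields contributions at most $2\varepsilon$ apiece; on the fixed compact $[\delta,R]$ the function $\Phi$ is absolutely continuous, hence bounded, so that piece carries an overall factor $\eta\to 0$ and vanishes. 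Thus $\limsup 2\eta\int_{\eta}^{l}\frac{\Phi(t)}{t^{3}}\,dt\le 4\varepsilon$, and letting $\varepsilon\to 0$ shows the second term tends to $0$.

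Combining the three estimates gives $\frac{1}{l}\int_{0}^{l}\phi_{x_0}(t)D_{n}^{l}(t)\,dt\to 0$, whence $S_{n}^{l}(x_0;f)\to f(x_0)$ by (2). For the uniform statement I would observe that when $f$ is bounded on $[a;b]$ and (7), (8) hold uniformly in $x_0\in[a;b]$, the excerpt already records that (6) holds uniformly; the $\varepsilon$–$\delta$–$R$ argument above then goes through with $\delta,R$ and the constant bounding $\Phi$ on $[\delta,R]$ chosen uniformly in $x_0$, and the remainder in (2) is likewise uniform, delivering uniform convergence on $[a;b]$.
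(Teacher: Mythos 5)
Your proposal is correct and follows essentially the same route as the paper: reduce to estimate (6) via (2), dispose of the first and third terms by (8) and by $\Phi(2\eta)=o(\eta)$ respectively, and handle the middle term by integrating by parts against $\Phi$ and exploiting $\Phi(t)=o(t)$ at both $0$ and $+\infty$. The only difference is cosmetic --- you split the surviving integral $2\eta\int_{\eta}^{l}\Phi(t)t^{-3}\,dt$ directly at $\delta$ and $R$, whereas the paper first splits at $1$ and then refines each half --- and both yield the same $\limsup\le C\varepsilon$ conclusion.
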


\begin{proof}
We apply (6). The first term of (6) is o(1) by hypothesis. The third term there is $\pi \eta^{-1} \Phi(2\eta)=o(1)$ when $\eta\rightarrow 0.$ Integration by parts of the second term gives 
\[\eta \int_{\eta}^{l} \frac{|\phi_{x_0}(t)|}{t^2} \,dt \ = \frac{\Phi(l)}{n\cdot l}-\frac{\Phi(\eta)}{\eta}+2\eta\int_{\eta}^{l} \frac{\Phi (t)}{t^3} \,dt \ \]
i.e
\[\eta \int_{\eta}^{l} \frac{|\phi_{x_0}(t)|}{t^2} \,dt \ = \frac{\Phi(l)}{n\cdot l}-\frac{\Phi(\eta)}{\eta}+2\eta\int_{\eta}^{1} \frac{\Phi (t)}{t^3} \,dt \ +2\eta\int_{1}^{l} \frac{\Phi (t)}{t^3} \,dt \ = \]\\
\[K_1 - K_2 + K_3 + K_4. \] 
Obviously, $K_1 = o(1), K_2 = o(1), \textrm{when} \quad \frac{l}{n}\rightarrow 0, \quad n \rightarrow + \infty, \quad l \rightarrow + \infty.  $

From $(7)$ we have  $\Phi(t)=o(t),$ when $t\rightarrow 0$. Therefore, for  $\forall \varepsilon >0$ there exists $\delta >0$ such that, when $0<t \le \delta $, then $\Phi(t) \le \varepsilon \cdot t $. So we have\\
\[K_3 = 2\eta\int_{\eta}^{\delta} \frac{\Phi (t)}{t^3} \,dt \ + 2\eta\int_{\delta}^{1} \frac{\Phi (t)}{t^3} \,dt \ \le \] \\
\[\le 2\eta \varepsilon \left( \frac{1}{\eta} - \frac{1}{\delta}\right)  + \frac{2\eta }{\delta^{3}}\int_{\delta}^{1} \Phi (t) \,dt \ \le 2\varepsilon + \frac{2\eta }{\delta^{3}}\int_{\delta}^{1} \Phi (t) \,dt. \]  \\
Since $\eta \rightarrow 0,$ when $n,l \rightarrow + \infty,$ therefore, there exists $N$ such that for $n,l \geq N $ 
\[ \eta \le \frac{ \varepsilon  \cdot \delta^{3}}{2 \int_{\delta}^{1} \Phi (t) \, dt}. \] \\
Whence
\[K_3 \le 3\varepsilon, \]
if $n$ and $l$ are large enough. Since $\varepsilon$ is arbitrary, we get 
$K_3=o(1),$ when $n,l \rightarrow + \infty$.
Also, from (7) we have  $\Phi(t)=o(t),$ when $t \rightarrow +\infty $. Therefore, for $\forall \varepsilon > 0$ there exists $s>1$ such that when $t \geq s$ then $\Phi(t) \le \varepsilon \cdot t.$ Whence
\[K_4 = 2\eta\int_{1}^{s} \frac{\Phi (t)}{t^3} \,dt \ +2\eta\int_{s}^{l} \frac{\Phi (t)}{t^3} \,dt \ \le \] \\ 
\[\le 2\eta\int_{1}^{s} \Phi (t) \,dt \ +2\eta\varepsilon \left( \frac{1}{s}- \frac{1}{l}\right)  \le 2\varepsilon  \] 
when $n$ and $l$ are large enough. Therefore, $K_4=o(1)$ when $n,l \rightarrow + \infty. $ The first part of the theorem is proved. The second part of the theorem will be similarly proved.

\end{proof}

\begin{remark}
In particular, if a function $f\in E$ is continuous on $(a';b')$, then the first condition of $(7)$ is satisfied uniformly over any closed interval $[a;b]$ $(a'<a \le b<b').$ 
\end{remark}

\end{document}